\documentclass[12pt,letterpaper, english]{article} 
\usepackage[activeacute,english]{babel}
\usepackage[]{inputenc}%acentos
\usepackage{moreverb}
\usepackage{natbib}
\usepackage{amsmath}
\usepackage{graphicx,color} 
\usepackage{amsfonts} 
\usepackage{amssymb}
\usepackage{amsthm}
\usepackage{amsxtra}
\usepackage{amsgen}
\usepackage{amscd}
\usepackage{tensor}
\usepackage{multirow}
\usepackage{latexsym}
 \usepackage{tabulary} 
\usepackage [colorlinks, bookmarks open, bookmarks numbered,citecolor= blue, urlcolor= blue]{hyperref} % if run by PDFLaTex 
\usepackage{float}
\newtheorem{thm}{Theorem}[section]

\newtheorem{defn}{Definition}[section]

\date{} 
\begin{document}
\title{Dynamics of four families of methods with the same weight function to solve nonlinear equations.}
\maketitle  
\begin{center}
\author{\textbf{ Livia J. Qui\~nonez T.$^{1}$ and Carlos E. Cadenas R.$^{1,2}$}}\\           
{\scriptsize
 $^{1}$ Department of Mathematics, Experimental Faculty of Science and Technology, University of Carabobo, Venezuela.\\
 $^{2}$ Multidisciplinary Center for Visualization and Scientific Computing, University of Carabobo, Venezuela .\\
}
\end{center}	
\begin{abstract}
We study the dynamics of four families of methods obtained with a weight function from a convex combination of Newton's method and a Newton-Halley type method on polynomials with two roots. We find the analytical expressions for the fixed and critical points. We study the stable and unstable behavior of the strange fixed points. Also, parameters spaces for identify methods with good behavior are presented. Then, several dynamic planes are presented to confirm the results obtained. Finally, some  periodic orbits with period two for a selected method are presented.\\
\end{abstract}

\textbf{Keywords} {nonlinear equations, order of convergence, quadratic polynomials, dynamic.}
\footnote{ Livia J. Qui\~nonez T., liviaq33@hotmail.com }
\footnote{Corresponding author: Carlos E. Cadenas R., ccadenas@uc.edu.ve , ccadenas45@gmail.com}

%%%%%%%%%%%%%%%%%%%%%%%%%%%%%%%%%%%%%%%%%%%%%%%%%%%%%%%%%%%%%%%%%%%%%%%%%%%%%%%%%%%%%%%%%%%%%%%%%%%%%%%%%%%%%%%%%%%%%%%%%%%%%%%%%%%%

\section{Introduction}\label{intro}
%%% add text here for the Introduction
Iterative methods are necessary usually for solving nonlinear equations $f(x)=0$ (\cite{JTraub1964}-\cite{IKArgyros1993}). Several good methods exist in the literature: Newton, Halley and Chebyshev methods between others.
In previous papers the study of the dynamic to various methods is done. Initially the dynamics of the methods and families of a point of order two and three were presented. Then, this approach is used to study the behavior of families of one point and multipoints methods. See (\cite{PBlanchard1984}-\cite{CECadenas2017a1}) for example.
%\section{Family based in a convex combination of the Newton's method and one Newton-Halley type method}
%\label{}
In \cite{CECadenas2017a00} the weight function
\begin{equation}\label{eq:H}
	H(t)=A+\frac{2(A-1)^2}{2(1-A)-t}\nonumber
\end{equation}

is used to obtain the following family of one point given by:
\begin{equation}\label{family}
	x_{n+1}=x_n-\frac{f(x_n)}{f'(x_n)}\left(A+\frac{2(A-1)^2}{2(1-A)-L_f(x_n)}\right);\quad n=0,1,2,\cdots\quad
\end{equation}
where $L_{f}(x)=\frac{f(x)f''(x)}{(f'(x))^2}.$

If $A\neq 1$ this method have third order of convergence and its error equation is given by
\begin{equation}
	e_{n+1}=\left(\frac{(1-2A)}{4(1-A)}\left[\frac{f''(\alpha)}{f'(\alpha)}\right]^2-\frac{1}{6}\frac{f'''(\alpha)}{f'(\alpha)}\right)e_n^3+O(e_n^4)
\end{equation}

The weight function $H$ given in (1) is used in \cite{CECadenas2018c} to obtain a convex combination of the Newton's method and one Newton-Halley type method to multiple roots
\begin{equation} \label{familym}
	x_{n+1}=x_n-m\frac{f(x_n)}{f'(x_n)}\left(A+\frac{2(A-1)^2}{2(1-A)-(1-m+mL_f(x_n))}\right); (A\neq1)
\end{equation}
with third order of convergence and asymptotic error constant  $$K(\alpha)=\frac{1}{2m^2}\left[\left(\frac{(m+3)(1-A)-2}{1-A}\right)A_1^2-2mA_2\right].$$

If we use the weight function (\ref{eq:H}) in \cite{CECadenas2019} we get the following family of two points:
\begin{eqnarray}
	% \nonumber to remove numbering (before each equation)
	y_{n}&=& x_n-\frac{f(x_n)}{f'(x_n)}\\
	x_{n+1}&=&x_n-\frac{f(x_n)}{f'(x_n)}\left(A+\frac{(A-1)^2f(x_n)}{(1-A)f(x_n)-f(y_{n})}\right)\label{family2}
\end{eqnarray}

If we use the weight function (\ref{eq:H}) in \cite{CECadenas2019b} we get the following family of two points:
\begin{eqnarray}\label{Eq3_1}
	% \nonumber to remove numbering (before each equation)
	y_n &=& x_n-\frac{2}{3}\frac{f(x_n)}{f'(x_n)} \\
	x_{n+1} &=& x_n-\frac{f(x_n)}{f'(x_n)}\left(A+\frac{2(A-1)^2}{2(1-A)-\frac{f'(x_n)-f'(y_n)}{\frac{2}{3} f'(x_n)}}\right)\label{GanderJarrattClass1}
\end{eqnarray}

On the other hand, it is well known that the families of one and two points given by (\ref{family}), (\ref{familym}), (\ref{family2}) and (\ref{GanderJarrattClass1}) have the same iteration equation when applied to polynomials with two different roots. In the case of the family given by (\ref{familym}), this is true if the two roots have multiplicity $m$ (see \cite{CECadenas2019b}).

In this work, the dynamics of the families (\ref{family}), (\ref{familym}), (\ref{family2}) and (\ref{GanderJarrattClass1}) for the class of polynomials with two different roots is studied simultaneously.

Now, we are going to recall some preliminaries of complex dynamics (see \cite{PBlanchard1984}, \cite{SAmat2004} and \cite{JMilnor2006}) that we use in this work.

\subsection{\textbf{Basic preliminaries}}

Given a rational function $R:\widehat{\mathbb{C}}\rightarrow\widehat{\mathbb{C}}$, where $\widehat{\mathbb{C}}$ is the Riemann sphere

\begin{defn} For $z\in\widehat{\mathbb{C}}$ we define its orbit as the set $$orb(z)=\left\{z,R(z),R^2(z),\cdots,R^n(z),\cdots\right\}.$$
\end{defn}

\begin{defn} A periodic point $z_0$ of period $m>1$ is a point such that $R^m(z_0)=z_0$ and $R^k(z_0)\neq z_0$, for $k<m$.
\end{defn}

\begin{defn} A pre-periodic point is a point $z_0$ that is not periodic but exist a $k>0$ such that $R^k$ is periodic.
\end{defn}

\begin{defn} A point $z_0$ is a fixed point of $R$ if $R(z_0)=z_0$.
\end{defn}

\begin{defn} A critical point $z_{cr}$ is a point such that $R'(z_{cr})=0$.
\end{defn}

\begin{defn} A fixed point $z_0$ is called attractor if $|R'(z_0)|<1$, repulsive if $|R'(z_0)|>1$, and parabolic or neutral if $|R'(z_0)|=1$. If $|R'(z_0)|=0$ then the fixed point is called superattractor. A fixed point superattractor is also a critical point.
\end{defn}

\begin{defn} A fixed point $z_0$ that is not associated to the roots of the function $f(z)$ is called strange fixed point.
\end{defn}

\begin{defn}The basin of attraction of a attractor $\alpha\in\widehat{\mathbb{C}}$ is defined as the set of starting points whose orbits tend to $\alpha$.
\end{defn}

%%%%%%%%%%%%%%%%%%%%%%%%%%%%%%%%%%%%%%%%%%%%%%%%%%%%%%%%%%%%%%%%%%%%%%%%%%%%%%%%%%%%%%%%%%%%%%%%%%%%%%%%%%%%%%%%%%%%%%%%%%%%%%%%%%%%

\section{Dynamical behavior of the rational function associated with the family in study}

Here the conjugacy class is established and the analytical expressions for the fixed and critical points  of this family in terms of the parameter $A$ is obtained. Then study of the fixed points, critical points and parameter space are presented. To finish this section several dynamical planes to different values of $A$ selected from parameter space are shown.

%%% ----------------------------------------------------------------------

\subsection{Conjugacy classes}

Here the authors establish the conjugacy class and the analytical expressions for the fixed and critical points of this family in terms of the parameter $A$ is obtained.

Throughout the remainder of this paper we study the dynamics of the rational map $R_f$ arising from the method (\ref{family})

\begin{equation}\label{familyR}
	R_f=z-\frac{f(z)}{f'(z)}\left(A+\frac{2(A-1)^2}{\left(2(1-A)-L_f(z)\right)}\right); \textrm{ where }  L_{f}(z)=\frac{f(z)f''(z)}{\left(f'(z)\right)^2}
\end{equation}
applied to a generic polynomial of two grade with simple roots. As mentioned above, the dynamic study for $R_f$ is the same as for the rational maps from the iteration equations  (\ref{familym}), (\ref{family2}) and (\ref{GanderJarrattClass1}) when applied to polynomials with two different roots. Let us first recall the definition of analytic conjugacy classes.

\begin{defn} \cite{AFBeardon1991}. Let $f$ and $g$ be two maps from the Riemann sphere into itself. An analytic conjugacy between  $f$ and $g$ is an analytic diffeomorphism $h$ from the Riemann sphere onto itself such that $h\circ f=g\circ h$.
\end{defn}

$R_f$ and $R_g$ have the following property for the analytic functions $f$ and $g$:

\begin{thm}\label{t:scaling} (The Scaling Theorem). Let $f(z)$ be an analytical function on the Riemann sphere, and let $T(z)=\alpha z+\beta$, $\alpha\neq0$, be an affine map. If $g(z)=f\circ T(z)$, then $T\circ R_g\circ T^{-1}=R_f(z)$. That is, $R_f$ is analytically conjugate to $R_g$ by $T$.
\end{thm}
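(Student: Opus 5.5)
The plan is a direct computation: express $R_g$ through $f$ and $T$, and check that $T\circ R_g = R_f\circ T$. Both $R_f$ and $R_g$ have the form $R_f(z)=z-\frac{f(z)}{f'(z)}\,H(L_f(z))$, where $H(t)=A+\frac{2(A-1)^2}{2(1-A)-t}$ is the weight function (\ref{eq:H}). The map depends on $f$ only through the Newton correction $f/f'$ and the degree of logarithmic convexity $L_f=ff''/(f')^2$. So it suffices to see how these two quantities transform under $g=f\circ T$.

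By the chain rule, with $T(z)=\alpha z+\beta$,
\[
g'(z)=\alpha f'(T(z)),\qquad g''(z)=\alpha^2 f''(T(z)).
\]
Hence
\[
\frac{g(z)}{g'(z)}=\frac{1}{\alpha}\,\frac{f(T(z))}{f'(T(z))},\qquad L_g(z)=\frac{f(T(z))\,\alpha^2 f''(T(z))}{\alpha^2 (f'(T(z)))^2}=L_f(T(z)).
\]
The key point is that $L$ is invariant under affine changes of variable, so the factor $H(L_g(z))$ equals $H(L_f(T(z)))$.

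Substituting these into $R_g$ gives
\[
R_g(z)=z-\frac{1}{\alpha}\frac{f(T(z))}{f'(T(z))}\,H\bigl(L_f(T(z))\bigr).
\]
Applying $T$,
\[
T(R_g(z))=\alpha z+\beta-\frac{f(T(z))}{f'(T(z))}H\bigl(L_f(T(z))\bigr)=R_f(T(z)).
\]
Thus $T\circ R_g=R_f\circ T$, which is equivalent to $T\circ R_g\circ T^{-1}=R_f$. Since $T$ is a Möbius transformation fixing $\infty$ with $\alpha\neq0$, it is an analytic diffeomorphism of $\widehat{\mathbb{C}}$, and so it is an analytic conjugacy in the sense of the preceding definition.

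The only delicate step is making the identity hold on all of the Riemann sphere rather than just at generic points. The identity above holds wherever both sides are defined, namely away from zeros of $f'\circ T$ and poles of $H\circ L_f\circ T$. Both sides are rational (or meromorphic) maps of $\widehat{\mathbb{C}}$, so agreement on a dense open set extends by continuity to equality as maps of the sphere, including at $\infty$, which $T$ fixes. I expect no real obstacle beyond this bookkeeping.
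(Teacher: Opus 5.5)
Your proposal is correct and follows essentially the same chain-rule computation as the paper ($g'=\alpha f'\circ T$, $g''=\alpha^2 f''\circ T$, hence $g/g'$ scales by $1/\alpha$ and $L_g=L_f\circ T$). The only difference is cosmetic: the paper evaluates at $T^{-1}(z)$ to obtain $T\circ R_g\circ T^{-1}=R_f$ directly, whereas you prove $T\circ R_g=R_f\circ T$, and your closing remark on extending the identity across the exceptional points is a harmless addition the paper omits.
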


\begin{proof}
	With the iteration functions $R_f$ and $R_g$ given in (\ref{familyR}), we have
	
	$$R_g(T^{-1}(z)) = T^{-1}(z)- \frac{g\left(T^{-1}(z)\right)}{g'\left(T^{-1}(z)\right)}\left(A+\frac{2(A-1)^2}{2(1-A)-L_{g}\left(T^{-1}(z)\right)} \right)$$
	$$with\quad L_{g}\left(T^{-1}(z)\right)= \frac{g\left(T^{-1}(z)\right)g''\left(T^{-1}(z)\right)}{\left(g'\left(T^{-1}(z)\right)\right)^2} \nonumber$$
	
	Since $\alpha  T^{-1}(z)+\beta=z$,  $g\circ T^{-1}(z)=f(z)$ and $\left(g\circ T^{-1}\right)'(z)=\frac{1}{\alpha} g'\left(T^{-1}(z)\right)$, we get $g'\left(T^{-1}(z)\right)=\alpha\left(g\circ T^{-1}\right)'(z)=\alpha f'(z)$, $g''\left(T^{-1}(z)\right)=\alpha^2 f''(z)$. We therefore have
	
	%\begin{equation*}
	%	T \circ R_g\circ T^{-1}(z) = T\left(R_g(T^{-1}(z))\right)=\alpha R_g\left(T^{-1}(z)\right)+\beta
	%\end{equation*}
	$T \circ R_g\circ T^{-1}(z) = T\left(R_g(T^{-1}(z))\right)=\alpha R_g\left(T^{-1}(z)\right)+\beta$
	\begin{eqnarray*}
		% \nonumber to remove numbering (before each equation)
		=&\alpha&  T^{-1}(z)- \frac{\alpha g\left(T^{-1}(z)\right)}{g'\left(T^{-1}(z)\right)}\left(A+\frac{2(A-1)^2}{2(1-A)-L_{g}\left(\frac{g\left(T^{-1}(z)\right)g''\left(T^{-1}(z)\right)}{\left(g'\left(T^{-1}(z)\right)\right)^2}\right)}  \right) +\beta\\
		=&z&-\frac{f(z)}{f'(z)}\left(A+\frac{2(A-1)^2}{2(1-A)-L_f(z)}\right)=R_f(z)
	\end{eqnarray*}	
	This completing the proof
\end{proof}

The  above scaling theorem indicates that up to a suitable change of coordinates the study of the dynamics of  the iteration function  (\ref{family}) for polynomials can be reduced to the study  of the dynamics of the same iteration functions for simpler polynomials.

\begin{defn} \cite{KKneisl2001}. We say that a one-point iterative root-finding algorithm $p\rightarrow T_p$ has a universal Julia set (for polynomials of degree d) if there exists a rational map $S$ such that for every degree $d$ polynomial $p$, $J(T_p)$ is conjugate by a M\"{o}bius transformation to $J(S)$
\end{defn}

The following theorem establishes a universal Julia set for quadratics for our methods (\ref{family}), (\ref{familym}), (\ref{family2}) and (\ref{GanderJarrattClass1}).

\begin{thm}\label{t:mio} For a rational map  $R_p(z)$ arising from the methods (\ref{family}),  (\ref{family2}) and (\ref{GanderJarrattClass1}) applied to $p(z)=(z-a)(z-b), a\neq b$,  $R_p(z)$  is conjugate via the M\"{o}bius transformation given by $M(z)=\frac{z-a}{z-b}$ to
	
	\begin{equation}\label{S}
		S(z)=\frac{z^3((A-1)z+2A-1)}{(2A-1)z+A-1}
	\end{equation}
	
\end{thm}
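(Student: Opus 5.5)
The plan is to compute $R_p$ explicitly for $p(z)=(z-a)(z-b)$ and then check the conjugacy identity $M\circ R_p = S\circ M$ directly, with $M(z)=\frac{z-a}{z-b}$. For the method (\ref{family}) I take $R_p$ from (\ref{familyR}). For (\ref{family2}) and (\ref{GanderJarrattClass1}) I would invoke the fact recalled in the introduction (see \cite{CECadenas2019b}) that on quadratics with distinct roots they give the same iteration function. As a sanity check I would verify this by noting that for a quadratic $p(y_n)=p(x_n)L_p(x_n)/2$ with $y_n$ the Newton step. Then $\frac{(A-1)^2 p}{(1-A)p-p(y)}=\frac{2(A-1)^2}{2(1-A)-L_p}$. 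Similarly, $p'(x)-p'(y)=2\cdot\frac23\frac{p}{p'}$, so $\frac{p'(x)-p'(y)}{\frac23p'(x)}=L_p(x)$. So all three methods reduce to the single map (\ref{familyR}), and only one computation is needed.

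Next I would simplify $R_p$. With $p=(z-a)(z-b)$ we have $p'=2z-a-b$, $p''=2$ and $L_p=2p/p'^2$. Substituting, the bracket becomes $A+\frac{(A-1)^2p'^2}{(1-A)p'^2-p}$. Hence
\[
R_p(z)=z-\frac{p}{p'}\cdot\frac{A(1-A)p'^2-Ap+(A-1)^2p'^2}{(1-A)p'^2-p}
=z-\frac{p\left((1-A)p'^2-Ap\right)}{p'\left((1-A)p'^2-p\right)}.
\]

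For the conjugacy, the cleanest route is to compute $R_p(z)-a$ and $R_p(z)-b$ separately and take their quotient. Writing $u=z-a$ and $v=z-b$, we have $p=uv$ and $p'=u+v$. Then
\[
R_p-a=\frac{u(u+v)\left((1-A)(u+v)^2-uv\right)-uv\left((1-A)(u+v)^2-Auv\right)}{(u+v)\left((1-A)(u+v)^2-uv\right)}.
\]
I expect the numerator to factor as $u^3\left((1-A)u+(2-3A)v\right)$ or something similar, up to signs; getting this exact is the step to verify carefully. By the symmetry $u\leftrightarrow v$, $R_p-b$ is given by the same expression with the roles of $u$ and $v$ swapped. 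The denominators cancel in the quotient, leaving
\[
M(R_p(z))=\frac{u^3\,(\alpha u+\beta v)}{v^3\,(\alpha v+\beta u)}.
\]
Dividing numerator and denominator by $v^4$ and setting $w=u/v=M(z)$ gives $\frac{w^3(\alpha w+\beta)}{\beta w+\alpha}$. The last step is to match the coefficients $\alpha,\beta$ to $(A-1)$ and $(2A-1)$ up to a common sign, which yields $S(w)$.

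The main obstacle is the algebra in the numerator factorization: confirming that, after expansion, the factor $u^3$ appears and the remaining linear factor has exactly the coefficients of (\ref{S}). Expanding,
\[
(1-A)u(u+v)^3-u^2v(u+v)-(1-A)uv(u+v)^2+Au^2v^2,
\]
and collecting the terms in $v^2$ and $v^3$, I would check that they cancel: the $uv^3$ terms give $(1-A)-(1-A)=0$, and the $u^2v^2$ terms give $3(1-A)-1-2(1-A)+A=0$. What remains should be $(1-A)u^4+\left(3(1-A)-1-(1-A)\right)u^3v=u^3\left((1-A)u+(1-2A)v\right)$. This gives $S(w)=\frac{w^3\left((1-A)w+1-2A\right)}{(1-2A)w+(1-A)}$, which equals (\ref{S}) after multiplying numerator and denominator by $-1$. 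Finally, $M$ is a Möbius transformation since $a\neq b$, so this identity is a genuine conjugacy.
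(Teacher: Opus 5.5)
Your proposal is correct and takes essentially the same route as the paper, whose proof just asserts the outcome of the direct computation $M\circ R_p\circ M^{-1}(u)=S(u)$. You actually carry the computation out, and it checks: the numerator of $R_p-a$ is $u^3\bigl((1-A)u+(1-2A)v\bigr)$ (so drop the tentative $2-3A$ guess in a final write-up), and the $u\leftrightarrow v$ symmetry then gives $M\circ R_p=S\circ M$ after the sign change. Your reduction of (\ref{family2}) and (\ref{GanderJarrattClass1}) to (\ref{familyR}), via $p(y_n)=\frac{1}{2}p(x_n)L_p(x_n)$ and $p'(x_n)-p'(y_n)=\frac{2}{3}p''(x_n)\,p(x_n)/p'(x_n)$, is also correct and supplies a fact the paper only cites.
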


\begin{proof}
	Let $p(z)=(z-a)(z-b), a\neq b$ and let $M$ be the M\"{o}bius transformation  given by $M(z)=\frac{z-a}{z-b}$ with its inverse  given by $M^{-1}(u)=\frac{bu-a}{u-1}$, which may be considered as a map from $\mathbb{C}\cup\{\infty\}$. We then have
	
	$$M\circ R_p\circ M^{-1}(u)=M\circ R_p\left(\frac{bu-a}{u-1}\right)=\frac{u^3((A-1)u+2A-1)}{(2A-1)u+A-1}$$
	
\end{proof}

We observe that parameters $a$ and $b$ have been obviated in $S(z)$, as an effect of the Scaling Theorem that is verified by this family.

An identical result can be established for the iteration equation (\ref{familym}) when applied to the polynomial $p_m(z)=(z-a)^m(z-b)^m, a\neq b$

%%% ----------------------------------------------------------------------

\subsection{\textbf{Newton's Method $(A=1)$}}\label{ssec:Newton}

If $A=1$ then $S(z)=z^2$, fixed points $z=0$, $z=\infty$ and $z=1$. $S'=2z$ then  $|S'(1)|=2$. strange fixed point repulsive

\subsection{\textbf{Halley's Method $\left(A=0\right)$}}\label{ssec:Halley}

If $A=0$ then $S(z)=z^3$, fixed points $z=0$, $z=\infty$ and $z=\pm 1$.  $S'=3z^2$ then  $|S'(\pm 1)|=3$. strange fixed points repulsive

\subsection{\textbf{An interesting case $\left(A=\frac{2}{3}\right)$}}\label{ssec:int}

If $A=\frac{2}{3}$ then $S(z)=-z^3$, fixed points $z=0$ , $z=\infty$ and $z=\pm i$. $S'=-3z^2$ then  $|S'(\pm i)|=3$. strange fixed points repulsive

\subsection{\textbf{Super Halley method $\left(A=\frac{1}{2}\right)$}}\label{ssec:S-Halley}

If $A=\frac{1}{2}$ then $S(z)=z^4$, fixed points $z=0$ , $z=\infty$ and $z_k=e^{i2k\pi /3}, k=0,1,2$. $S'=4z^3$ then  $|S'(z_k)|=4, k=0,1,2$. strange fixed points repulsive

\subsection{\textbf{Study of the fixed points}}

The fixed points of $S(z)$ are the roots of the equation $S(z)=z$, that is, $z=0$,   $z=\infty$, $z=1$ and
\begin{equation}\label{}
	z_{2,3}=\frac{3A-2\pm\sqrt{A(5A-4)}}{2(1-A)}
\end{equation}

Observe that $z_2=\frac{1}{z_3}$ and $z_2=z_3$ only when $A$ is $0$ and $\frac{4}{5}$; in these cases $z_2=z_3=-1$ and  $z_2=z_3=1$ respectively.

In Figure \ref{Figure_Fixed_Points},  the behavior of the fixed points  for real values of $A$ between $-5$ and $5$ is represented.
\begin{figure}
	\centering
	% Requires \usepackage{graphicx}
	\includegraphics[scale=1]{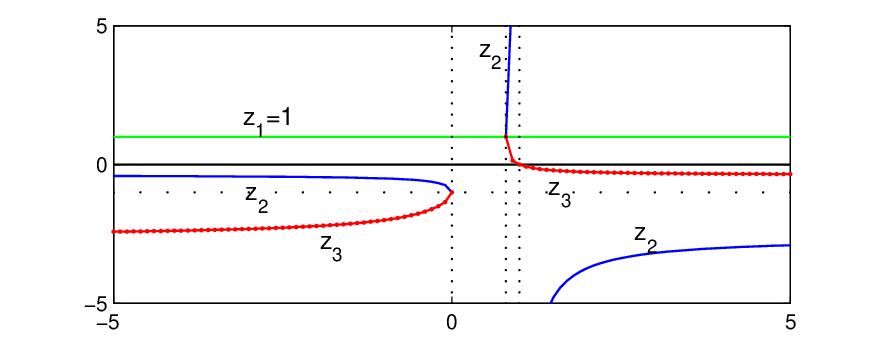}\\
	\caption{Dynamical Behavior of strange fixed points  for $-5 < A < 5$}\label{Figure_Fixed_Points}
\end{figure}

In order to study the stability of the fixed points, we calculate the first derivative of $S(z)$,

\begin{equation}\label{S'General}
	S'(z)=\frac{z^2\left[3(2A^2-3A+1)z^2+2(6A^2-8A+3)z+3(2A^2-3A+1)\right]}
	{\left[(2A-1)z+A-1\right]^2}
\end{equation}

Is obvious from (\ref{S'General}) that $z=0$ and $z=\infty$ are superatractive fixed points. The stability of the others fixed points changes depending on the values of the parameters $A$.

The operator $S'(z)$ in $z=1$ gives

\begin{equation}\label{S'1}
	|S'(1)|=2\left|\frac{4A-3}{3A-2}\right|,\quad \left(A\neq\frac{2}{3}\right)
\end{equation}

If we analyze this function, we obtain an horizontal asymptote in $|S'(1)|=\frac{4}{3}$ when $A\rightarrow \pm\infty$, and a vertical asymptote in $A=\frac{2}{3}$.

In the following result we present the stability of the  fixed point $z=1$

\begin{thm}\label{t:stability_z=1}
	The strange fixed point $z=1$ satisfies the following statements:
	
	\begin{enumerate}
		\item If $\left|A-\frac{42}{55}\right|<\frac{2}{55}$, then $z=1$ is an attractor and, in particular, it is a superattractor for $A=\frac{3}{4}$
		\item If $\left|A-\frac{42}{55}\right|=\frac{2}{55}$, then $z=1$ is a parabolic fixed point
		\item If $A\neq \frac{2}{3}$ and $\left|A-\frac{42}{55}\right|>\frac{2}{55}$, then $z=1$ is a repulsive fixed point.
	\end{enumerate}
\end{thm}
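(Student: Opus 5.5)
The plan is to compute $S'(1)$ exactly as a rational function of $A$ and then rewrite the conditions $|S'(1)|<1$, $=1$, $>1$ as the interior, boundary and exterior of an Apollonius circle in the $A$-plane. I will treat $A$ as a complex parameter, so that the three statements describe a disk, its boundary circle and its exterior.

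\textbf{Step 1: $z=1$ is a fixed point.} From (\ref{S}),
$$S(1)=\frac{(A-1)+(2A-1)}{(2A-1)+(A-1)}=\frac{3A-2}{3A-2}=1$$
whenever $A\neq\frac{2}{3}$. When $A=\frac{2}{3}$, Subsection~\ref{ssec:int} gives $S(z)=-z^3$, so $S(1)=-1$ and $z=1$ is not fixed there. This explains the exclusion of $A=\frac{2}{3}$.

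\textbf{Step 2: the multiplier.} Evaluate (\ref{S'General}) at $z=1$. The numerator becomes
$$6(2A^2-3A+1)+2(6A^2-8A+3)=24A^2-34A+12=2(4A-3)(3A-2),$$
and the denominator is $(3A-2)^2$. Hence
$$S'(1)=\frac{2(4A-3)}{3A-2},$$
which agrees with (\ref{S'1}). This already gives the superattracting case: $S'(1)=0$ exactly when $A=\frac{3}{4}$. I will also check that $A=\frac34$ lies in the disk of item 1: $\left|\frac34-\frac{42}{55}\right|=\frac{3}{220}<\frac{8}{220}=\frac{2}{55}$.

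\textbf{Step 3: the circle.} Write $A=x+iy$. The condition $|S'(1)|<1$ is equivalent to $4|4A-3|^2<|3A-2|^2$, that is,
$$4\bigl(16|A|^2-24x+9\bigr)<9|A|^2-12x+4 .$$
This simplifies to $55|A|^2-84x+32<0$, i.e. $|A|^2-\frac{84}{55}x+\frac{32}{55}<0$. Completing the square gives
$$\left(x-\tfrac{42}{55}\right)^2+y^2<\left(\tfrac{42}{55}\right)^2-\tfrac{32}{55}=\frac{1764-1760}{3025}=\left(\tfrac{2}{55}\right)^2 .$$
Replacing $<$ by $=$ or $>$ yields items 2 and 3, using the definitions of attractor, parabolic and repulsive fixed points.

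\textbf{Main obstacle.} The only real care needed is the algebra in Step 2, namely factoring the numerator so that the common factor $(3A-2)$ cancels, together with the bookkeeping for the excluded value $A=\frac{2}{3}$. The cancellation is legitimate precisely because $A\neq\frac23$. I would double-check the multiplier against the special cases already computed: $A=1$ gives $S'(1)=2$ (Newton) and $A=0$ gives $S'(1)=3$ (Halley), both matching the earlier subsections.
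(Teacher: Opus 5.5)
Your proposal is correct and follows essentially the same route as the paper. Both reduce the comparison of $|S'(1)|=2\left|\frac{4A-3}{3A-2}\right|$ with $1$ to comparing $4|4A-3|^2$ with $|3A-2|^2$, write $A=x+iy$, and complete the square to obtain the circle $|A-\frac{42}{55}|=\frac{2}{55}$. Your extra checks fill in details the paper leaves implicit: that $z=1$ is fixed exactly when $A\neq\frac23$, the factorization $24A^2-34A+12=2(4A-3)(3A-2)$, that $A=\frac34$ lies inside the disk, and the separate treatment of the $<$, $=$, $>$ cases.
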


\begin{proof}
	From (\ref{S'1}),
	
	\begin{equation}\label{}
		|S'(1)|=2\left|\frac{4A-3}{3A-2}\right|\leq1\Rightarrow 2\left|4A-3\right|\leq\left|3A-2\right|
	\end{equation}
	
	Let $A=\alpha+i\beta$ be an arbitrary complex number. Then,
	
	\begin{equation}
		|4A-3|^{2}=(4\alpha-3)^{2}+(4\beta)^{2}\nonumber
	\end{equation}
	and
	\begin{equation}
		|3A-2|^{2}=(3\alpha-2)^{2}+(3\beta)^{2}\nonumber
	\end{equation}
	So
	
	\begin{equation}\label{}
		\left(\alpha-\frac{42}{55}\right)^2+\beta^2\leq\frac{4}{3025}\nonumber
	\end{equation}
	
	Therefore,
	\begin{equation}\label{}
		|S'(1)|\leq 1 \Leftrightarrow \left|A-\frac{42}{55}\right|\leq\frac{2}{55}\nonumber
	\end{equation}
	
	Finally, if $\left|A-\frac{42}{55}\right|>\frac{2}{55}$, then $|S'(1)|>1$ and $z=1$ is a repulsive fixed point.
\end{proof}

The operator $S'(z)$ in $z=z_{2,3}=\frac{3A-2\pm\sqrt{A(5A-4)}}{2(1-A)} (A\neq 0, A\neq 1)$ gives

\begin{equation}\label{}
	\left|S'\left(z_{2,3}\right)\right|=\frac{1}{2}\left|\frac{(6A-5)A\left(-2+3A\pm\sqrt{A(5A-4)}\right)^3}{\left(4A^2-3A\pm(2A-1)\sqrt{A(5A-4)}\right)^2(A-1)}\right|
\end{equation}

As $$\left(3A-2\pm\sqrt{A(5A-4)}\right)^3A=2\left(4A^2-3A\pm(2A-1)\sqrt{A(5A-4)}\right)^2$$ then

\begin{equation}\label{S'1}
	\left|S'\left(z_{2,3}\right)\right|=\left|\frac{6A-5}{A-1}\right|:\quad (A\neq 0, A\neq 1)
\end{equation}

If we analyze this function, we obtain an horizontal asymptote in $\left|S'\left(z_{2,3}\right)\right|=6$ when $A\rightarrow \pm\infty$, and  vertical asymptotes in $A=1$.

In the following result we present the stability of fixed points $z=z_{2,3}$

\begin{thm}\label{t:stability_z=xx}
	The strange fixed points $z_{2,3}=\frac{3A-2\pm\sqrt{A(5A-4)}}{2(1-A)}$ satisfies the following statements:
	
	\begin{enumerate}
		\item If $\left|A-\frac{29}{35}\right|<\frac{1}{35}$, then $z=z_{2,3}$ are attractors and, in particular, they are superattractors for $A=\frac{5}{6}$
		\item If $\left|A-\frac{29}{35}\right|=\frac{1}{35}$, then $z=z_{2,3}$ are parabolic fixed points
		\item If $A\neq 0$, $A\neq 1$ and  $\left|A-\frac{29}{35}\right|>\frac{1}{35}$, then $z=z_{2,3}$ are repulsive fixed points.
	\end{enumerate}
\end{thm}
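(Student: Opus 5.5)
We follow the same scheme as the proof of Theorem \ref{t:stability_z=1}, starting from the closed form $\left|S'(z_{2,3})\right|=\left|\frac{6A-5}{A-1}\right|$ obtained above for $A\neq 0,1$. Stability of $z_{2,3}$ is decided by comparing this modulus with $1$. So the statement reduces to identifying the region $\{A\in\mathbb{C}: |6A-5|<|A-1|\}$, its boundary, and its complement. Since the multiplier is the same for both signs, $z_2$ and $z_3=1/z_2$ always have the same character, which is why the theorem treats them together.

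The key steps are as follows.

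\textbf{Step 1.} Write $A=\alpha+i\beta$, so that
\begin{equation*}
|6A-5|^2=(6\alpha-5)^2+36\beta^2, \qquad |A-1|^2=(\alpha-1)^2+\beta^2 .
\end{equation*}

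\textbf{Step 2.} The inequality $|6A-5|^2\le |A-1|^2$ becomes $35\alpha^2-58\alpha+24+35\beta^2\le 0$. Dividing by $35$ and completing the square gives
\begin{equation*}
\left(\alpha-\tfrac{29}{35}\right)^2+\beta^2\le \tfrac{841-840}{1225}=\tfrac{1}{1225}.
\end{equation*}
That is, $\left|A-\frac{29}{35}\right|\le\frac{1}{35}$, with equality holding exactly when $|S'(z_{2,3})|=1$.

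\textbf{Step 3.} Read off the three items.
\begin{itemize}
\item The strict inequality gives attraction.
\item Equality gives parabolic fixed points.
\item The reverse strict inequality gives repulsion.
\end{itemize}
For the superattracting case, $S'(z_{2,3})=0$ exactly when $6A=5$, i.e.\ $A=\frac56$. This value lies in the disk, since $\left|\frac56-\frac{29}{35}\right|=\frac{1}{210}<\frac1{35}$.

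The main point needing care is the validity of the simplified multiplier formula. The identity $\left(3A-2\pm\sqrt{A(5A-4)}\right)^3A=2\left(4A^2-3A\pm(2A-1)\sqrt{A(5A-4)}\right)^2$ is taken as established earlier. Still, we must check that no denominator vanishes for $A$ in the relevant region, and that $z_{2,3}$ remain finite, distinct strange fixed points there.

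\begin{itemize}
\item The exclusions $A\neq0,1$ in item 3 handle $A=1$, where $z_{2,3}$ escape to $\infty$, and $A=0$, where $z_2=z_3=-1$. Both points lie outside the closed disk, since the disk lies within $[\frac{28}{35},\frac{30}{35}]$ on the real axis, so items 1 and 2 need no exclusion.
\item The remaining degenerate value is $A=\frac45$, where $z_2=z_3=1$ and the fixed point is multiple. It lies on the boundary circle, and there the multiplier is indeed $1$: $|6\cdot\frac45-5|/|\frac45-1|=1$. This is consistent with item 2, and also with Theorem \ref{t:stability_z=1}, since $|\frac45-\frac{42}{55}|=\frac{2}{55}$.
\end{itemize}

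I would remark on this coincidence but otherwise rely on the formula as given.
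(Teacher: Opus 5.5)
Your proposal is correct and follows the paper's proof essentially verbatim. Like the paper, you start from $\left|S'(z_{2,3})\right|=\left|\frac{6A-5}{A-1}\right|$, write $A=\alpha+i\beta$, and complete the square to obtain the disk $\left|A-\frac{29}{35}\right|\le\frac{1}{35}$; your explicit checks of $A=\frac56$ and of the degenerate values $A=0,\frac45,1$ are a useful addition the paper omits, though one side remark needs correcting: at $A=1$ the points $z_{2,3}$ do not both escape to $\infty$, since $z_2z_3=1$ forces one to tend to $\infty$ and the other to $0$, where they merge with the superattracting fixed points.
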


\begin{proof}
	From (\ref{S'1}),
	
	\begin{equation}\label{S'1}
		\left|S'\left(z_{2,3}\right)\right|=\left|\frac{6A-5}{A-1}\right| \leq1\Rightarrow \left|6A-5\right|\leq\left|A-1\right|
	\end{equation}
	
	Let $A=\alpha+i\beta$ be an arbitrary complex number. Then,
	
	\begin{equation}
		|6A-5|^{2}=(6\alpha-5)^{2}+(6\beta)^{2}\nonumber
	\end{equation}
	and
	\begin{equation}
		|A-1|^{2}=(\alpha-1)^{2}+(\beta)^{2}\nonumber
	\end{equation}
	So
	
	\begin{equation}\label{}
		\left(\alpha-\frac{29}{35}\right)^2+\beta^2\leq\frac{1}{1225}\nonumber
	\end{equation}
	
	Therefore,
	\begin{equation}\label{}
		\left|S'\left(z_{2,3}\right)\right|=\left|\frac{6A-5}{A-1}\right| \leq1 \Leftrightarrow \left|A-\frac{29}{35}\right|\leq\frac{1}{35}\nonumber
	\end{equation}
	
	Finally, if $\left|A-\frac{29}{35}\right|>\frac{1}{35}$, then $|S'(z_{2,3})|>1$ and thus $z_{2,3}$ are repulsive fixed points.
\end{proof}

In  Figure \ref{Figure_Stability}  the functions where are observed the regions of stability are graphed. These functions are given by
\begin{eqnarray}
	% \nonumber to remove numbering (before each equation)
	S_1(1) &=& min\left\{|S'(1)|,1\right\} \\
	S_1(z_{2,3}) &=& min\left\{|S'(z_{2,3})|,1\right\}
\end{eqnarray}
So, zones of stability are when $S_1(A)=1$.

\begin{figure}
	\centering
	% Requires \usepackage{graphicx}
	\includegraphics[scale=.8]{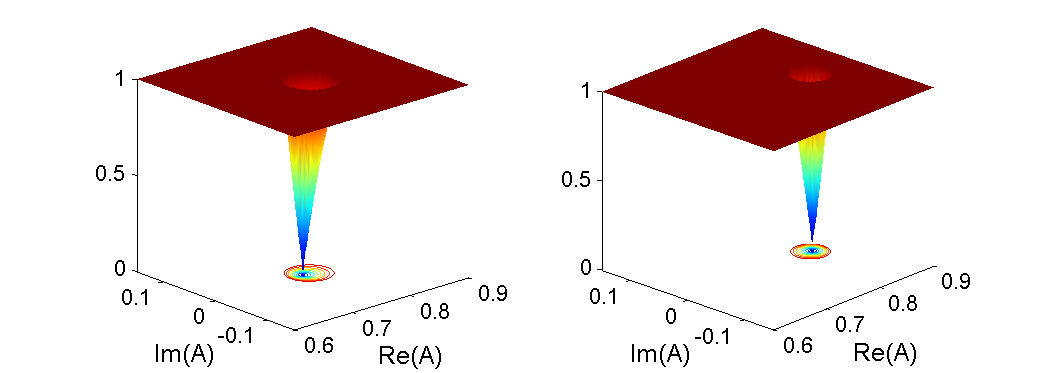}\\
	\caption{Stability regions of the strange fixed point. Left: $z=1$. Right: $z=z_{2,3}$}\label{Figure_Stability}
\end{figure}

%%% ----------------------------------------------------------------------

\subsection{\textbf{Study of the critical points}}

Critical points of $S(z)$ satisfy $S'(z)=0$, that is, $z=0$, $z=\infty$ and

\begin{equation}\label{Critical_Point_zc1}
	zc_1=\frac{6A^2-8A+3+\sqrt{12A^3-17A^2+6A}}{3(3A-1-2A^2)}
\end{equation}

\begin{equation}\label{Critical_Point_zc2}
	zc_2=\frac{6A^2-8A+3-\sqrt{12A^3-17A^2+6A}}{3(3A-1-2A^2)}
\end{equation}
Observe that $zc_1=\frac{1}{zc_2}$ and $zc_1=zc_2$ only when $A$ is $0,\frac{2}{3}$ and $\frac{3}{4}$; in these cases  $zc_1=zc_2=-1$ for $A=0$ and  $zc_1=zc_2=1$ for $A=\frac{2}{3}$ and $A=\frac{3}{4}$.

In Figure \ref{Figure_Critical_Points},  the behavior of the  critical points for real values of $A$ between $-1$ and $5$ is represented.
\begin{figure}
	\centering
	% Requires \usepackage{graphicx}
	\includegraphics[scale=1]{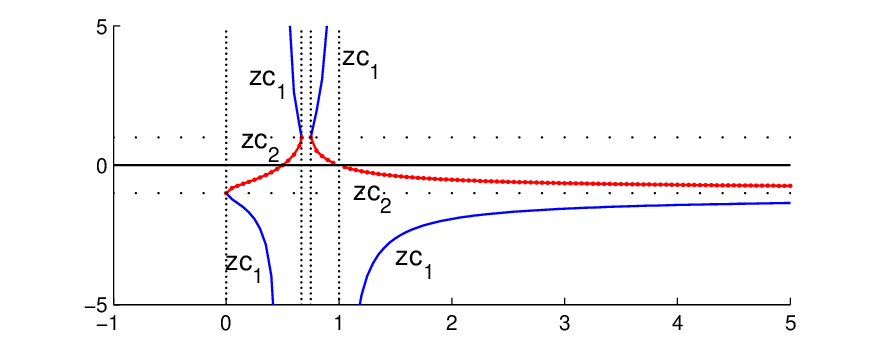}\\
	\caption{Dynamical Behavior of critical points for $-1 < A < 5$}\label{Figure_Critical_Points}
\end{figure}

\subsection{\textbf{Study of parameter space}}
In order to find the best members of the family in terms of stability, the parameter spaces associated to the free critical points $zc_1$ will be shown. It is well known that there is at least one critical point associated with each invariant Fatou component.  The parameter plane is obtained by iterating the selecting critical point; each point of the parameter plane is associated with a complex value of $A$, i.e., with an element of the family. Here, we using a mesh of $1000 \times 1000$ points, a maximum of $50$ iterations and a tolerance of $10^{-2}$ . Red color in Figures \ref{Figure_Espacio_Parametros_Puntos_Criticos2} and \ref{Figure_Espacio_Parametros_Puntos_Criticos3} means that the critical point is in the basin of attraction of $z=0$ or $z=\infty$, whereas that blue color indicates that the critical point generates iterations do not converge; another colors indicates convergence to strange points fixed. The ideas of \cite{CECadenas2017Cuencas} were used to create the figures.

\begin{figure}[h]
	\centering
	% Requires \usepackage{graphicx}
	\includegraphics[scale=.8]{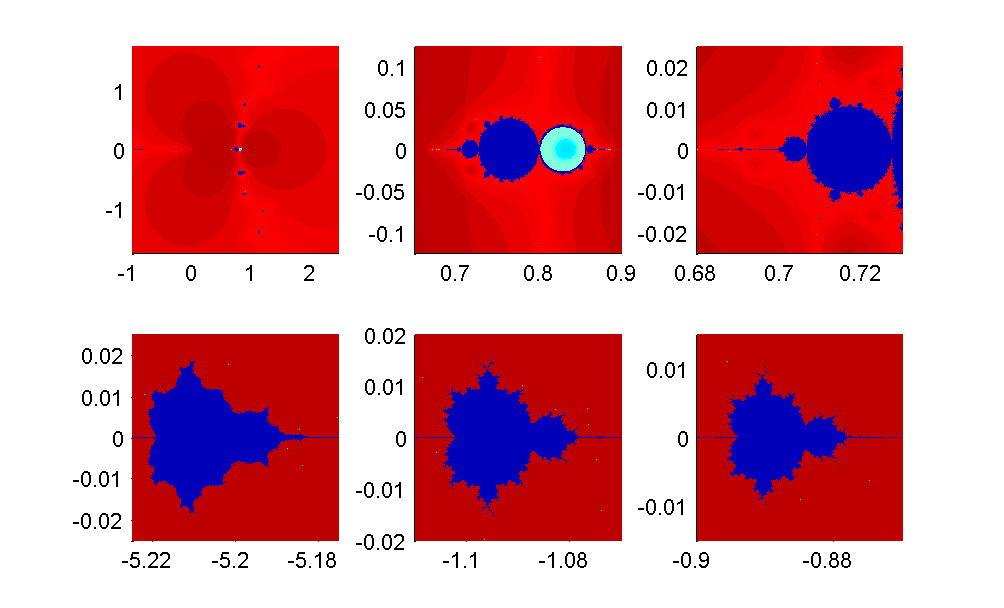}\\
	\caption{Parameter plane associated to the critical point $zc1$ and zoom}\label{Figure_Espacio_Parametros_Puntos_Criticos2}
\end{figure}

\begin{figure}[h]
	\centering
	% Requires \usepackage{graphicx}
	\includegraphics[scale=.65]{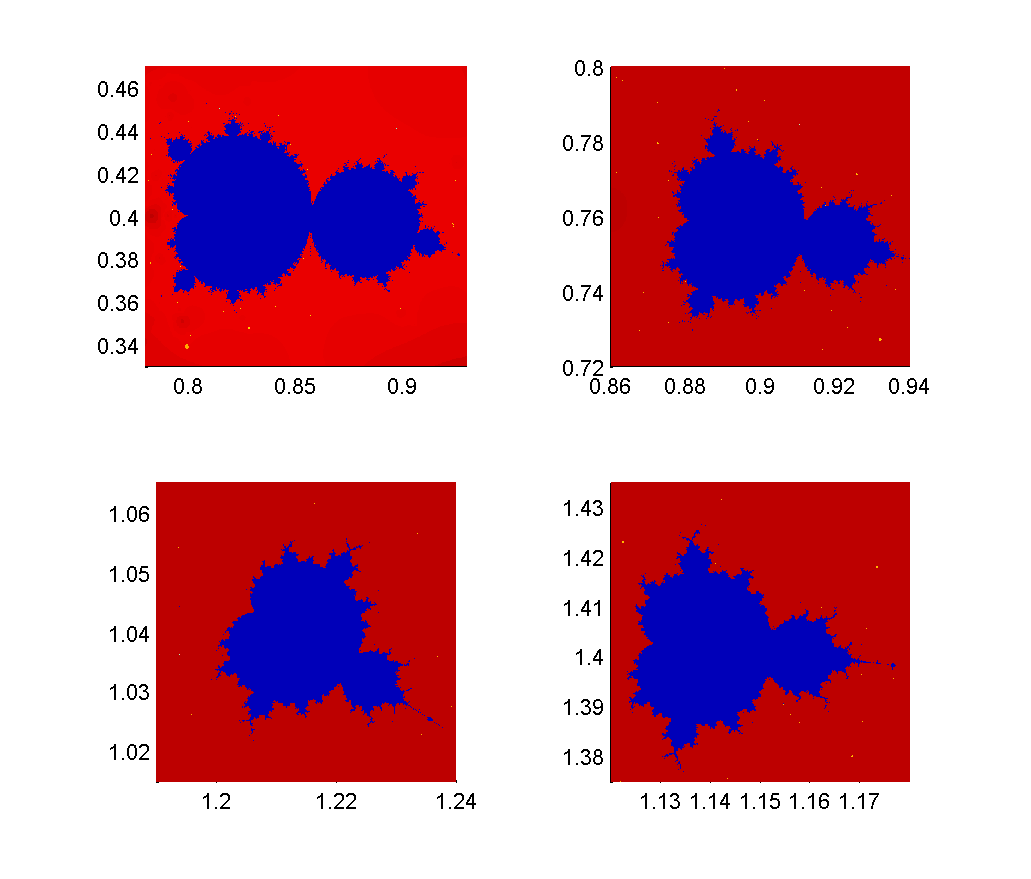}\\
	\caption{Various areas of the $zc1$-associated parameter space.}\label{Figure_Espacio_Parametros_Puntos_Criticos3}
\end{figure}

%\begin{figure}
%\centering
%  % Requires \usepackage{graphicx}
%  \includegraphics[scale=1]{Espacio_Parametros_Puntos_CriticosB.eps}\\
%  \caption{Parameter plane associated to the critical point $zc1$ and zoom}\label{Figure_ParameterB}
%\end{figure}
%
%

%%%%%%%%%%%%%%%%%%%%%%%%%%%%%%%%%%%%%%%%%%%%%%%%%%%%%%%%%%%%%%%%%%%%%%%%%%%%%%%%%%%%%%%%%%%%%%%%%%%%%%%%%%%%%%%%%%%%%%%%%%%%%%%%%%%%

\subsection{\textbf{Dynamical Planes}}

Then, focussing the attention in the regions shown in Figure \ref{Figure_Espacio_Parametros_Puntos_Criticos2} it is evident that
there exist members of the family with complicated behavior. In Figures \ref{Figure_Stables}-\ref{Figure_Dynamic1i} diverse stable dynamical planes are shown. In these dynamical planes the convergence to 0 appear in light blue, in red it appears the convergence
to $\infty$.

The dynamic planes that appear in Figure \ref{Figure_Stables} correspond to the methods given in subsections \ref{ssec:Newton}, \ref{ssec:Halley}, \ref{ssec:int} and \ref{ssec:S-Halley}. In all of them it is possible to observe the two well differentiated attraction basins delimited by a circumference of radius one.

In Figure \ref{Figure_Stables3} we can see several disconnected zones that correspond to the two attraction basins. We can also observe their behavior or evolution as the value of the parameter $A$ is increased.

In Figure \ref{Figure_Dynamic1} the dynamic planes are represented for three cases of negative values of the parameter $A$. Then, in Figure \ref{Figure_Dynamic1i} the value of the parameter is increased  in $i=\sqrt{-1}$, observing the distortion of the attraction basins.

%, in dark blue the zones with no convergence to the roots and others colors appears the convergence to strange fixed points.

\begin{figure}
	\centering
	% Requires \usepackage{graphicx}
	\includegraphics[scale=.8]{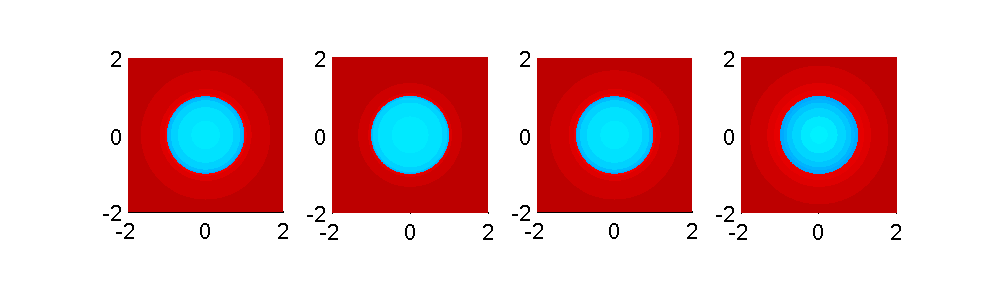}\\
	\caption{Dynamical planes. $A=0,\frac{1}{2},\frac{2}{3},1$.}\label{Figure_Stables}
\end{figure}

%\begin{figure}
%	\centering
%	% Requires \usepackage{graphicx}
%	\includegraphics[scale=.8]{Stables2.eps}\\
%	\caption{Dynamical planes. $A=-10,-5,-1,-\frac{1}{4}$.}\label{Figure_Stables2}
%\end{figure}

%\begin{figure}
%	\centering
%	% Requires \usepackage{graphicx}
%	\includegraphics[scale=.8]{Stables4.eps}\\
%	\caption{Dynamical planes. $A=-10+i,-5+i,-1+i,-\frac{1}{4}+i$.}\label{Figure_Stables4}
%\end{figure}

\begin{figure}
	\centering
	% Requires \usepackage{graphicx}
	\includegraphics[scale=.8]{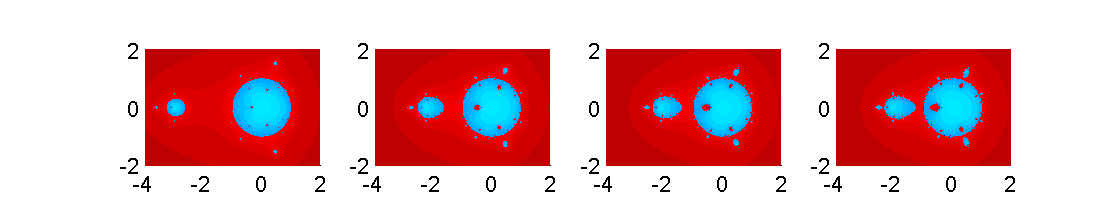}\\
	\caption{Dynamical planes. $A=2,5,10,20$.}\label{Figure_Stables3}
\end{figure}

%%% ----------------------------------------------------------------------

\begin{figure}
	\centering
	% Requires \usepackage{graphicx}
	\includegraphics[scale=.8]{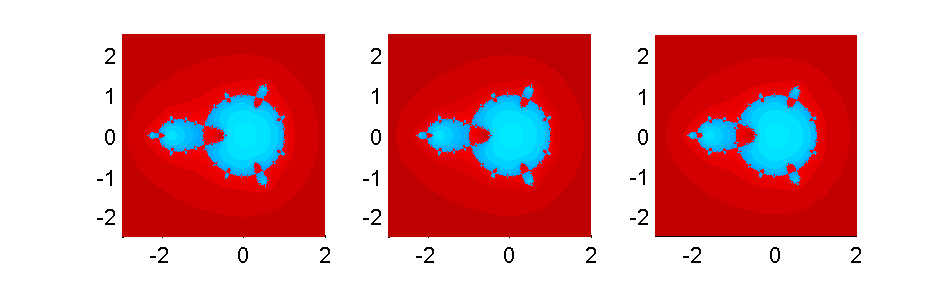}\\
	\caption{Dynamical planes. Left: $A=-4$. Center: $A=-3$. Right: $A=-2$}\label{Figure_Dynamic1}
\end{figure}

\begin{figure}
	\centering
	% Requires \usepackage{graphicx}
	\includegraphics[scale=.8]{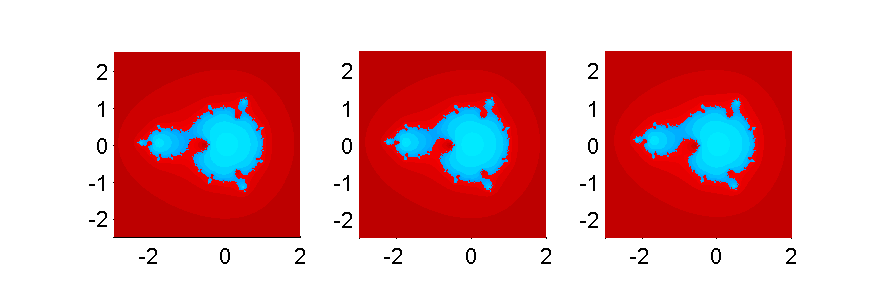}\\
	\caption{Dynamical planes. Left: $A=-4+i$. Center: $A=-3+i$. Right: $A=-2+i$}\label{Figure_Dynamic1i}
\end{figure}

%\begin{figure}
%\centering
%  % Requires \usepackage{graphicx}
%  \includegraphics[scale=.8]{Dynamic2.eps}\\
%  \caption{Dynamical planes. Left: $A=-1$. Center: $A=-\frac{3}{4}$. Right: $A=-\frac{1}{2}$.}\label{Figure_Dynamic2}
%\end{figure}

%\begin{figure}
%\centering
%  % Requires \usepackage{graphicx}
%  \includegraphics[scale=1]{Dynamic3.eps}\\
%  \caption{Dynamical planes. Left: $A=-\frac{1}{4}$. Center: $A=0$. Right: $A=\frac{1}{4}$.}\label{Figure_Dynamic3}
%\end{figure}

In Figure  \ref{Figure_InStablesPeriod2} left, a dynamical plane with  of a member of the family $Re(A)=\frac{3}{4}$ is shown. In this case only five periodic orbits of periodic two are shown. In Table \ref{TablePeriod2} points of the six periodic orbits of periodic two are presented.

%
%In Figures  \ref{Figure_InStablesPeriod2} right, the dynamical plane of a member of the family ($A=0.825$) with regions of convergence to any of the strange fixed points is shown.

In Figure \ref{Figure_InStablesPeriod2} to the right, the dynamic plane of a family member ($A=0.825$) is shown with regions of convergence to two of the strange fixed points (yellow and green).

\begin{figure}
	\centering
	% Requires \usepackage{graphicx}
	\includegraphics[scale=.75]{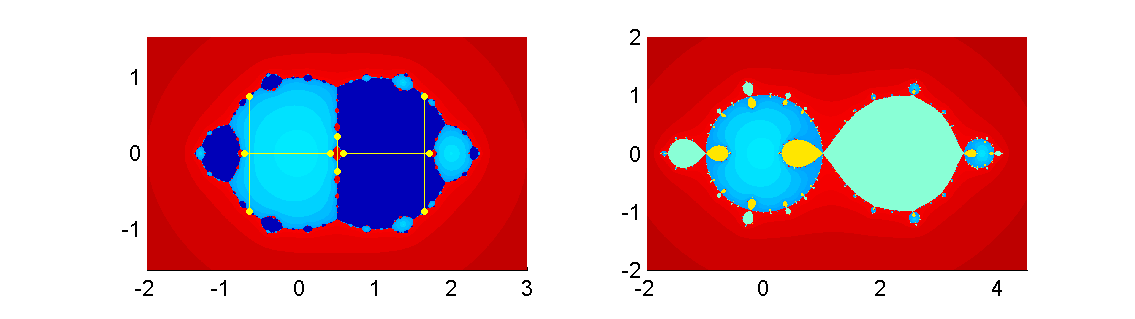}\\
	\caption{Dynamical planes. Left: $A=\frac{3}{4}$.  Right: $A=0.825$.}\label{Figure_InStablesPeriod2}
\end{figure}

\begin{table}\label{TablePeriod2}
	\begin{center}
		\begin{tabular}{|c|c|c|}
			\hline
			% after \\: \hline or \cline{col1-col2} \cline{col3-col4} ...
			No & $z_1$ & $z_2$ \\\hline\hline
			1 & -0.7220838057 & 0.4194081680 \\\hline
			2 & 0.5806918199 & 1.722083806 \\\hline
			3 & -0.6513878189+0.7587449568 i & -0.6513878189-0.7587449568 i \\\hline
			4 & 0.5+0.2297294882 i & 0.5-0.2297294882 i \\\hline
			5 & 1.6513878189+0.7587449568 i & 1.6513878189-0.7587449568 i \\\hline
			6 & -1.384880802 & 2.384880802 \\
			\hline
		\end{tabular}
	\end{center}
	\caption{Periodic  orbits of period two.}
\end{table}

%\begin{figure}
%\centering
%  % Requires \usepackage{graphicx}
%  \includegraphics[scale=1]{Dynamic5.eps}\\
%  \caption{Dynamical planes.  $A=0.825$. }\label{Figure_Dynamic5}
%\end{figure}
%
%
%\begin{figure}
%\centering
%  % Requires \usepackage{graphicx}
%  \includegraphics[scale=1]{Dynamic6.eps}\\
%  \caption{Dynamical planes with complex parameters. Left: $A=-\frac{1}{4}+i$. Center: $A=\frac{1}{4}+i$ . Right: $A=\frac{1}{2}+i$.}\label{Figure_Dynamic6}
%\end{figure}
%
%
%\begin{figure}
%\centering
%  % Requires \usepackage{graphicx}
%  \includegraphics[scale=1]{Dynamic7.eps}\\
%  \caption{Dynamical planes with imaginary pure parameters. Left: $A=i$.  Right: $A=-i$.}\label{Figure_Dynamic7}
%\end{figure}

%
%
%\begin{figure}
%\centering
%  % Requires \usepackage{graphicx}
%  \includegraphics[scale=1]{Orbita_periodica.eps}\\
%  \caption{Dynamical planes with complex parameters . Left: 2-periodic orbit with $A=1.35$.  Right: zoom.}\label{Figure_Dynamic8}
%\end{figure}

% ------------------------------------------------------------------------

\section{Result and discussion}

In this paper, a complex dynamical study of  a retrieved family through a convex combination of the Newton's method and one Newton-Halley type method, on quadratic polynomial, is presented. Once the associated rational operator has been found, the fixed and critical points have been obtained and the stability of the strange fixed points are studied. Then, we identified areas both good and bad behavior in space of parameter associated with the free critical point. Of these areas the values of parameter $A$ were selected to illustrate the behavior of the method associated through the dynamical planes. Finally, six periodic orbits were calculated and five of them were represented in a figure.

\label{end-art}
\end{document}